\documentclass[12pt,reqno]{amsart}
\textwidth 13cm 

\usepackage{graphicx}
\usepackage{amssymb,amscd, amsthm, latexsym, mathrsfs, epsfig}
\usepackage[all]{xy}
\usepackage{todonotes}

\newcommand\PP{\mathbb P}

\newcommand\C{\mathbb C}

\newcommand\Q{\mathbb Q}
\newcommand\R{\mathbb R}
\newcommand\Z{\mathbb Z}

\newcommand{\M}{\mathcal{M}}

\newcommand{\eps}{\epsilon}

\newcommand\Aut{\operatorname{Aut}}
\newcommand\ad{\operatorname{ad}}
\newcommand\dt{\operatorname{DT}}

\newcommand\bra{\langle}
\newcommand\ket{\rangle}
\newcommand{\pow}[1]{[\![ {#1} ]\!]}

\newcommand{\del}{\partial}

\newcommand{\Hom}{\operatorname{Hom}}

\newcommand{\Coh}{\operatorname{Coh}}

\makeatletter \@addtoreset{equation}{section} \makeatother

\newtheorem{thm}{Theorem}[section]

\newtheorem{lem}[thm]{Lemma}

\theoremstyle{definition}
\newtheorem{definition}[thm]{Definition}

\newtheorem{rmk}[thm]{Remark}

\newcommand\narrowdots{\hbox to 1em{.\hss.\hss.}}

\title[ ]{A note on BPS structures and Gopakumar-Vafa invariants}

\author{Jacopo Stoppa}

\email{jstoppa@sissa.it}
\address{SISSA, via Bonomea 265, 34136 Trieste, Italy} 
\address{Institute for Geometry and Physics (iGAP), Via Beirut 2-4, 34014 Trieste, Italy}
\begin{document}

\maketitle

{\centering\footnotesize \emph{Dedicated to the memory of Boris Dubrovin} \par}

\begin{abstract} We regard the work of Maulik and Toda, proposing a sheaf-theoretic approach to Gopa\-kumar-Vafa invariants, as defining a BPS structure, that is, a collection of BPS invariants together with a central charge. Assuming their conjectures, we show that a canonical flat section of the flat connection corresponding to this BPS structure, at the level of formal power series, reproduces the Gromov-Witten partition function for all genera, up to some error terms in genus $0$ and $1$. This generalises a result of Bridgeland and Iwaki for the contribution from genus $0$ Gopakumar-Vafa invariants.
\end{abstract}

\section{Introduction}

The abstract notions of a \emph{BPS structure} and of its \emph{variation} package some important properties of enumerative invariants of Donaldson-Thomas type for Calabi-Yau threefolds \cite{js, ks}. These notions are a special case of the more general \emph{stability data} introduced by Kontsevich and Soibelman \cite{ks}; the terminology is due to Bridgeland \cite{bridRH}.

A given BPS structure defines \emph{BPS automorphisms} of a certain infinite-dimensional algebra, and these should be regarded as defining in a very natural way the monodromy of a flat connection $\nabla^{\rm{BPS}}$ on an infinite-dimensional principal bundle over $\PP^1$. This basic intuition was developed in several works including \cite{Anna, AnnaJ, bridRH, bt_stab, fgs, ks}. 

In the present note we are only concerned with a very special case of this theory, so we can be almost self-contained: the relevant background material may be found in Section \ref{BackSec}. It is important to point out that we will work at the level of formal power series, in a sense that will be made clear in the following (and as explained, for example, in \cite{AnnaJ, fgs}, and in \cite{Anna} Section 4.2). This applies in particular to the connection $\nabla^{\rm{BPS}}$ and to its flat sections. We do not try to prove ``non-perturbative" results in the spirit of \cite{bridRH2}: this is a much harder, open problem, in general.

The purpose of this note is to observe that the recent work of Maulik and Toda \cite{mautoda, todaGV}, concerning a geometric, sheaf-theoretic, definition of Gopakumar-Vafa invariants, leads naturally to the introduction of a corresponding BPS structure. The work of Maulik and Toda is briefly reviewed in Section \ref{MauTodaSec}. The principal virtue of this BPS structure is explained by our main result, Theorem \ref{MainThm}, which we summarise here in somewhat imprecise terms. 
\begin{thm}[see Theorem \ref{MainThm}]\label{MainThmIntro} Let us work at the level of formal power series, and assume the conjectures of Maulik and Toda. Then the canonical flat section, along the ray $\R_{>0}$, of the flat connection $\nabla^{\rm{BPS}}$ attached to the Maulik-Toda BPS structure, has an expansion around $t = 0$ which reproduces the Gromov-Witten partition function expressed in Gopakumar-Vafa form, except for some error terms due to Gopakumar-Vafa contributions to Gromov-Witten invariants of genus $0$ and $1$.
\end{thm}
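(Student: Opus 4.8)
The starting point is that the Maulik-Toda BPS structure is \emph{uncoupled}: on the lattice $\Z\delta\oplus H_2(X,\Z)$, with $\delta$ the class of a point, the skew-symmetric form vanishes identically, since the D0 and D2 charges pair trivially. As recalled in Section~\ref{BackSec}, for an uncoupled BPS structure the connection $\nabla^{\rm{BPS}}$ is abelian, and its canonical flat section along $\R_{>0}$ is given by an explicit product, over the active rays of the BPS spectrum, of a single universal transcendental function $\Lambda$ of Barnes $G$-type, evaluated at $Z(\gamma)/t$ and raised to the power $\Omega(\gamma)$. The plan is therefore to (i) write this BPS structure out explicitly, reading the BPS invariants off the Maulik-Toda data; (ii) expand the resulting product around $t=0$ using the asymptotics of $\Lambda$; and (iii) recognise the Gopakumar-Vafa form of the Gromov-Witten partition function in the outcome, while tracking the discrepancy.

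For step (i): besides the pure-D0 classes $\pm n\delta$, which carry the Donaldson-Thomas invariant of $n$ points and depend only on $\chi(X)$, the spectrum consists of the D2-D0 classes $\pm(\beta+n\delta)$ for effective $\beta$. The central charge $Z$ restricts on $H_2(X,\Z)$ to the complexified K\"ahler class, and together with $Z(\delta)$ it fixes the Novikov variables $q^{\beta}$. And --- this is where the conjectures of Maulik and Toda are used --- the BPS invariants $\Omega(\pm(\beta+n\delta))$ are extracted, as $n$ varies, from the $\mathfrak{sl}_2\times\mathfrak{sl}_2$-Lefschetz structure on the Maulik-Toda perverse sheaf on the moduli space of one-dimensional stable sheaves of support class $\beta$, in such a way that the $n_g^{\beta}$ appear as the ``left spin~$g$'' multiplicities. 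For step (ii), I would expand $\log\Lambda(Z(\gamma)/t)$ as $t\to 0^{+}$ into two parts: the universal ``Bernoulli'' part of the Barnes-type asymptotic series, and the finitely many remaining ``classical'' terms, which are polynomial --- and logarithmic at the two lowest orders --- in the argument. Summing the Bernoulli part over the D0-tower above each curve class $\beta$, weighted by the Maulik-Toda BPS invariants and using the classical identities that express the lattice sums $\sum_{n\in\Z}(Z(\beta)+nZ(\delta))^{-m}$ through the cotangent function and its derivatives (equivalently through $\li_{1-m}(q^{\beta})$), then reorganises the flat section into
\[
\sum_{g}\sum_{\beta}\sum_{k\ge 1}\frac1k\,n_g^{\beta}\,(2\sin(kt/2))^{2g-2}\,q^{k\beta},
\]
i.e.\ the Gopakumar-Vafa form of the Gromov-Witten partition function, with $t$ playing the role of the genus parameter; the Lefschetz weights are exactly what produces the $(2\sin(kt/2))^{2g-2}$ dressing that separates the genera. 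All the sums over $\beta$ and over multiple covers are controlled $q$-adically --- the spectrum being infinite but locally finite --- so the whole manipulation is legitimate in the ring of formal power series in the $q^{\beta}$ and in $t$, which is the setting in which the theorem is stated.

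It remains, for step (iii), to match the outcome against the Gromov-Witten partition function and to locate the error. For $g\ge 2$ the Bernoulli part reproduces the genus-$g$ free energy $F_g$ exactly, since in Gopakumar-Vafa form $F_g$ carries no part polynomial in the Novikov variables; the finitely many classical terms in the asymptotics of $\Lambda$ contribute only at orders $t^{-2}$ and $t^{0}$ --- genus $0$ and genus $1$ --- where they produce a discrepancy which is polynomial (respectively polynomial and logarithmic) in the $q^{\beta}$. This is the asserted error, attributable to the genus $0$ and genus $1$ Gopakumar-Vafa contributions, and the statement reduces, when $n_g^{\beta}=0$ for all $g\ge 1$, to the theorem of Bridgeland and Iwaki. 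The main obstacle I anticipate is not the passage to this rank-one ($\Lambda$-function) analysis, which is formal once the BPS structure is in hand, but the bookkeeping in steps (ii) and (iii): verifying that the Maulik-Toda Lefschetz data, combined with the cotangent-sum identities, reorganises into the Gopakumar-Vafa sum with no further discrepancy, and identifying the classical terms of $\Lambda$ with exactly the polynomial parts of the genus $0$ and genus $1$ prepotentials.
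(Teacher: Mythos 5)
Your overall strategy --- exploit uncoupledness, write the canonical flat section explicitly, expand around $t=0$, and isolate a discrepancy at genus $0$ and $1$ --- is the paper's strategy, and your guess that the error comes from the low-order ``classical'' part of a Bernoulli-type expansion is essentially right: in the paper the error appears as the terms $1/(m^2u^2)$ and $-1/6$ by which $(2\sin(mu/2))^{-2}$ differs from the tail $\sum_{s\geq 1}(-1)^{s-1}\tfrac{|B_{2s}|}{(2s)(2s-2)!}(mu)^{2s-2}$. (Whether one writes the flat section through closed-form Barnes/Gamma-type functions $\Lambda$ or, as the paper does, manipulates the defining integrals directly is a cosmetic difference.)

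However, there is a genuine gap in your step (i) that propagates into step (ii). You take the charge lattice to be $\Z\delta\oplus H_2(X,\Z)$ and propose to encode the Lefschetz/genus data in the $n$-dependence of $\Omega(\beta+n\delta)$ along the D0 tower. This cannot work: the perverse degree indexing the Maulik--Toda invariants $\mathcal{S}_n(\beta)$ and the D0 charge (the holomorphic Euler characteristic $m$) are two \emph{independent} integers, and the two mechanisms you invoke assign them incompatible roles. The cotangent/polylogarithm lattice-sum identities (equivalently the Dirac-comb/Poisson summation that generates the multiple-cover index) require the BPS invariant to be \emph{constant} along the tower --- this is precisely what conjecture \textbf{C$\chi$} is used for --- whereas the genus separation requires the invariant to \emph{vary} with the perverse degree; moreover $\mathcal{S}_n(\beta)$ is supported on finitely many $n$, so a sum such as $\sum_n \mathcal{S}_n(\beta)\,(v_\beta+nZ(\delta))^{-j}$ is finite and produces no cotangent at all. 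The paper resolves this by enlarging the lattice to $\bigoplus_{i}H_{2i}(X,\Z)\oplus E$, where $E\cong\Z$ records the perverse degree and carries its own central charge parameter $\eps$: the invariants $\Omega(m,\beta,n)$ are independent of $m\in H_0$ (so summing over $m$ gives the Dirac comb and the multiple-cover index) and equal $(-1)^n\mathcal{S}_n(\beta)$ in $n\in E$ (so summing over $n$ weighted by $e^{-2\pi i m n\eps}$ gives the $(2\sin(\pi m\eps))^{2g}$ factors via \eqref{SheafGV}). The single Gromov--Witten coupling $u$ only emerges after the further specialisation $\eps=u/(2\pi)$, $t=u/(2\pi)^2$; in your setup there is no second parameter to specialise, so the $(2\sin(ku/2))^{2g}$ dressing cannot be produced by $t$ alone, and the reorganisation you assert at the end of step (ii) does not go through.
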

Theorem \ref{MainThm} is proved in Section \ref{MainSec}. As we will explain it can be seen as a generalisation of a computation due to Bridgeland and Iwaki (see \cite{bridRH} Section 6.3) for the contribution from genus $0$ Gopakumar-Vafa invariants. Again that computation misses the Gopakumar-Vafa contributions to Gromov-Witten invariants of genus $0$ and $1$.
\begin{rmk} A more classical approach to Gopakumar-Vafa invariants uses Pandharipande-Thomas stable pairs \cite{ptBPS}. An analogue of Theorem \ref{MainThm} for stable pairs is studied in \cite{jPairs}. It is interesting to compare the two approaches. The Gopakumar-Vafa contributions emerge in two very different ways. The genus $0$ and $1$ cases also play a special role in the stable pairs approach: for example the genus $0$ case leads to divergent integrals that require appropriate regularisation.
\end{rmk}
The proof of Theorem \ref{MainThm} will show that (a specialisation of) the canonical flat section of $\nabla^{\rm BPS}$ along $\R_{>0}$ may be written as a sum of contributions from effective curve classes $\beta$, and that each such contribution can be regarded as a well-defined function of $t \in \R_{>0}$, with values in a certain ring of formal power series. In particular it makes sense to consider the large $t$ asymptotics of each fixed $\beta$ contribution. This is reminiscent of computations of the entries of the ``central connection matrix" for the Dubrovin connection in quantum cohomology \cite{cotti, dubrovin, gamma}, of which $\nabla^{\rm BPS}$ is an analogue. The leading order term for these asymptotics is worked out in Section \ref{Large_t_Sec}. We point out that it defines in a natural way a formal family of automorphisms of an algebraic torus, of the type studied in \cite{gps}.\\
 
\noindent{\textbf{Acknowledgements.}} I would like to thank A. Barbieri, T. Bridgeland, M. Kool and J. Scalise for some discussions related to this note, and the anonymous Referees for several important comments on the manuscript. I am very grateful to the organisers and participants in the workshops \emph{Geometric correspondences of gauge theories}, Vienna, August-September 2018, and \emph{D-modules, quantum geometry, and related topics}, Kyoto, December 2018, especially to H. Iritani, K. Iwaki, T. Mochizuki, C. Sabbah and Y. Toda, for their interest in my talks and for their suggestions. My participation in these activities was supported by the Erwin Schr\"odinger Institute, Vienna and the Research Institute for the Mathematical Sciences, Kyoto.\\ 

\section{Basic notions}\label{BackSec}

\begin{definition} A \emph{BPS structure} $(\Gamma, \bra - , -\ket, Z, \Omega)$ is the datum of a finite rank lattice $\Gamma$ (the \emph{charge lattice}), endowed with a skew-symmetric bilinear form $\bra-,-\ket$ with values in $\Z$ (the \emph{intersection form}), together with a group homomorphism $Z\!:\Gamma \to \C$ (the \emph{central charge}) and a map of sets $\Omega\!: \Gamma \to \Q$ (the \emph{BPS spectrum}). 
\end{definition}
Usually the additional \emph{support condition} is imposed: fixing some arbitrary norm $|| - ||$ on $\Gamma \otimes \R$, there is a uniform constant $C > 0$ such that for all $\alpha$ with $\Omega(\alpha) \neq 0$, 
\begin{equation*}
|Z(\alpha)| > C || \alpha ||.
\end{equation*} 
In fact this does not play a role in the present note.
\begin{definition} A BPS structure is \emph{integral} if $\Omega$ takes values in $\Z$, and \emph{symmetric} if we have $\Omega(\alpha) = \Omega(-\alpha)$ for all $\alpha \in \Gamma$.
\end{definition}

\begin{definition} A BPS structure is \emph{uncoupled} if the locus in $\Gamma$ where $\Omega$ does not vanish is isotropic: that is for all $\alpha, \beta \in \Gamma$ with $\Omega(\alpha), \Omega(\beta) \neq 0$ one has $\bra \alpha, \beta \ket = 0$.
\end{definition}

\begin{definition} A \emph{framed variation of uncoupled BPS structures} over a complex or real analytic manifold $M$ is a family of uncoupled BPS structures of the form $(\Gamma_p, \bra - , - \ket_p, Z_p, \Omega_p)$, parametrised by $p \in M$, such that $\Gamma_p$, $\bra - , - \ket_p$ and $\Omega_p$ are all \emph{constant} in $p$, while $Z_p \in \Hom(\Gamma, \C)$ varies \emph{holomorphically} or \emph{real analytically} with $p$. 
\end{definition}
\begin{definition} We denote by $\C[\Gamma]$ the group-algebra of $\Gamma$ endowed with the usual commutative product, twisted by the form $\bra - , -\ket$,
\begin{equation*}
x_{\alpha} x_{\beta} = (-1)^{\bra \alpha, \beta\ket} x_{\alpha + \beta}
\end{equation*}
and with the Poisson bracket
\begin{equation*}
[x_{\alpha}, x_{\beta}] = (-1)^{\bra \alpha, \beta\ket} \bra \alpha,\beta\ket x_{\alpha + \beta}.
\end{equation*}
We let $\C[\Gamma]\pow{s}$ denote the ring of formal power series in one variable over $\C[\Gamma]$, and extend the above commutative product and Poisson bracket to $\C[\Gamma]\pow{s}$ by $\C\pow{s}$-linearity.
\end{definition}
Fix a norm $|| - ||$ on $\Gamma \otimes\R$ which takes integer values on $\Gamma \subset \Gamma \otimes\R$.
\begin{definition} For a fixed uncoupled BPS structure, the \emph{BPS automorphism} $\mathbb{S}_{\ell} \in \Aut_{\C\pow{s}}(\C[\Gamma]\pow{s})$ attached to a ray $\ell \subset \C^*$ is defined by
\begin{equation*}
\mathbb{S}_{\ell}(x_{\alpha}) = x_{\alpha}\prod_{Z(\beta) \in \ell}(1- s^{|| \beta ||} x_{\beta})^{\Omega(\beta)\bra\alpha,\beta\ket}.
\end{equation*}
\end{definition}
\begin{rmk} Here we make rigorous sense of the automorphisms $\mathbb{S}_{\ell}$, and later on, of the BPS connection $\nabla^{\rm BPS} $ and its flat sections, by working with formal power series in $s$ and using the arbitrary norm $|| - ||$. This approach is \emph{not} compatible with variations, except in the uncoupled case. We refer to \cite{AnnaJ, bridRH, ks} for more details on the general case. 
\end{rmk}
In what follows we denote by $\operatorname{Der}_{\C\pow{s}}(\C[\Gamma]\pow{s})$ the module of derivations as a commutative algebra. Note that a given central charge $Z$ can be regarded as such a derivation acting by $Z(x_{\alpha}) = Z(\alpha) x_{\alpha}$. 

It turns out that $\mathbb{S}_{\ell}$ is a Poisson automorphism, and in fact it can be expressed uniquely in the form
\begin{equation*}
\mathbb{S}_{\ell} = \exp_{\operatorname{Der}_{\C\pow{s}}(\C[\Gamma]\pow{s})}\left(\ad \sum_{Z(\alpha) \in \ell} \dt(\alpha) s^{||\alpha||}x_{\alpha} \right)
\end{equation*}
for certain ``Donaldson-Thomas" rational numbers $\dt(\alpha) \in \Q$. 
\begin{definition} The \emph{BPS flat connection} attached to a fixed uncoupled BPS structure is the meromorphic connection $\nabla^{\rm BPS}$ on the trivial principal $\Aut_{\C\pow{s}}(\C[\Gamma]\pow{s})$-bundle over $\PP^1$, of the form
\begin{equation*}
\nabla^{\rm BPS} = d - \left(\frac{Z}{t^2} + \frac{f}{t}\right) dt
\end{equation*} 
for some $f \in \operatorname{Der}_{\C\pow{s}}(\C[\Gamma]\pow{s})$, and such that its generalised monodromy at $t = 0$ is given by the collection of rays and automorphisms $\{ \ell, \mathbb{S}_{\ell}\}$. 
\end{definition}
One can prove that for a fixed $Z$ such a connection exists and is unique (see \cite{fgs}). 
\begin{definition} Fix a ray $r \subset \C^*$, distinct from all $\ell \subset \C^*$ for which $\mathbb{S}_{\ell} \neq \operatorname{Id}$. The \emph{canononical flat section} of $\nabla^{\rm BPS}$ along $r$ is a real analytic function $X_r(t)$, defined for $t \in r$, with values in $\Aut_{\C\pow{s}}(\C[\Gamma]\pow{s})$, such that $\nabla^{\rm BPS} X_r(t) = 0$ and we have $\exp(-Z/t) X_r(t) \to \operatorname{Id}$ as $t\to 0$. 
\end{definition} 
Again one can show that this exists and is unique for a fixed $Z$ (see \cite{fgs}), but in fact in the present uncoupled case it is quite easy to write down the flat section explicitly. Namely, for each element $\alpha \in \Gamma$ we introduce a function $\operatorname{F}_{\alpha}(t)$, given by
\begin{align*}
\operatorname{F}_{\alpha}(t) = \Omega(\alpha)\frac{t}{2\pi i}  \int_{\R_{>0}Z(\alpha)} \frac{\log(1 -  s^{||\alpha||} x_{\alpha} e^{- Z(\alpha)/z})}{z-t}\frac{dz}{z}.
\end{align*}
Here the logarithm appearing in the integrand must be interpreted as  the usual formal power series expansion in $s$. It is straightforward to check that $\operatorname{F}_{\alpha}(t)$ is a well-defined holomorphic function of $t \in \C^*\setminus \{\pm \R_{>0}Z(\alpha)\}$, with values in $\C[\Gamma]\pow{s}$. Let $r \subset \C^*$ be a ray distinct from $\R_{<0}$ and all the $\ell \subset \C^*$ for which $\mathbb{S}_{\ell} \neq \operatorname{Id}$. For all $t \in r$, we define a $\C\pow{s}$-linear endomorphism $X_{r}(t)$ of $\C[\Gamma]\pow{s}$ by
\begin{equation}\label{GeneralRHSol}
X_{r}(t)(x_{\alpha}) = x_{\alpha }e^{Z(\alpha)/t} \exp\left(\sum_{\beta \in \Gamma} \bra \alpha, \beta \ket \operatorname{F}_{\beta}(t) \right).
\end{equation}
One can show that, for all $\alpha$, $X_{r}(t)(x_{\alpha})$ is a real-analytic function of $t \in r$, with values in $\C[\Gamma]\pow{s}$, and that $X_{r}(t)$ defines a canonical flat section of $\nabla^{\rm BPS}$ \cite{AnnaJ, fgs}. The crucial point for the latter claim is that, working modulo any power of the formal parameter $s$, $X_{r}(t)$ extends to a homolorphic function of $t$ in the complement of finitely many rays $\ell \subset \C^*$, with branch-cut discontinuities along $\ell$ given by the relation between clock-wise and counter-clockwise limits 
\begin{equation*}
X^+_{r}(t) = \mathbb{S}_{\ell}\circ X^-_{r}(t). 
\end{equation*}
This follows from a version of the Cauchy formula (sometimes called Plemelj's formula).   

\section{Maulik-Toda variation of BPS structure}\label{MauTodaSec} 

Let $X$ be a Calabi-Yau threefold. Maulik and Toda \cite{mautoda} proposed a new approach to the problem of providing a sheaf-theoretic definition of the Gopakumar-Vafa invariants of $X$. 

Firstly they observed that in all existing sheaf-theoretic approaches (such as those due to Hosono-Saito-Takahashi \cite{hosono} and Kiem-Li \cite{kiem}), once sheaf-theoretic invariants $\mathcal{S}_n(\beta)$ have been introduced for $n \in \Z$, $\beta \in H_2(X, \Z)$, the corresponding conjectural relation to the Gopakumar-Vafa invariants $n_{g, \beta}$ is always captured by the relations
\begin{equation}\label{SheafSymm}
\mathcal{S}_{n}(\beta) = \mathcal{S}_{-n}(\beta)
\end{equation}
and
\begin{equation}\label{SheafGV}
\sum_{n \in \Z} \mathcal{S}_{n}(\beta) y^n = \sum_{g\geq0} n_{g, \beta} (y^{1/2} + y^{-1/2})^{2g}.  
\end{equation}
Note that the first property \eqref{SheafSymm} implies that one can always find unique integers $n_{g, \beta}$ so that the second relation \eqref{SheafGV} is satisfied.

Secondly, fixing a K\"ahler form $\omega$, Maulik and Toda proposed to refine the existing sheaf-theoretic approaches by considering the fine moduli space $M_{1}(\beta)$, parametrising Gieseker $\omega$-semistable torsion sheaves $F \in \Coh(X)$ with dimension $1$, fundamental class $[F] = \beta$ and holomorphic Euler characteristic $\chi(F) = 1$, together with its Hilbert to Chow morphism
\begin{equation*}
\pi\!: M_{1}(\beta) \to \rm{Chow}(\beta).
\end{equation*} 
Denoting by $\Phi_{M_{1}(\beta)}$ the perverse sheaf of vanishing cycles on $M_{1}(\beta)$ (constructed in \cite{brav, kiem}), the proposed sheaf-theoretic invariants are given by
\begin{equation}
\mathcal{S}_{n}(\beta) = \chi(^{p}\mathcal{H}^n (\textbf{R}\pi_*\Phi_{M_{1}(\beta)})).
\end{equation} 
Property \eqref{SheafSymm} then follows from the self-duality of $\Phi_{M_{1}(\beta)}$ and Verdier duality, and the sums appearing in \eqref{SheafGV} are automatically \emph{finite}.
\begin{rmk} This definition of $\mathcal{S}_{n}(\beta)$ is a refinement of the torsion-sheaf invariants enumerating curves in class $\beta$ constructed by Joyce and Song (see \cite{js} Section 6.4), which are given by $\chi(\Phi_{M_{1}(\beta)})$. Conjecturally one has an equivalence with Gopakumar-Vafa invariants $\chi(\Phi_{M_{1}(\beta)}) = n_{0, \beta}$ (loc. cit. Conjecture 6.20). Note that this is compatible with specialising \eqref{SheafGV} to $y = -1$.
\end{rmk}
Importantly for us Maulik and Toda \cite{mautoda, todaGV} conjecture that this new sheaf-theoretic definition is very robust. In particular it should be independent of the parameters in the construction in two crucial ways:
\begin{enumerate}
\item[\textbf{C$\omega$}:] the invariants $\mathcal{S}_{n}(\beta)$ should not depend on the choice of K\"ahler form $\omega$ used in defining Gieseker semistability;
\item[\textbf{C$\chi$}:] the fine moduli space $M_{1}(\beta)$ can be replaced by any coarse moduli space $M_{m}(\beta)$ of Gieseker $\omega$-semistable sheaves $F\in \Coh(X)$ with $[F] = \beta$, $\chi(F) = m \in \Z$, without affecting the invariants $\mathcal{S}_{n}(\beta)$, provided the perverse sheaf of vanishing cycles $\Phi_{M_{1}(\beta)}$ is replaced by a suitable perverse sheaf $\Phi_{M_{m}(\beta)}$, descended from the moduli stack $\M_{m}(\beta)$. 
\end{enumerate}

These conjectural properties provide strong motivation for introducing a variation of BPS structures corresponding to the Maulik-Toda construction.  
\begin{definition}\label{MauTodaBPSDef} The Maulik-Toda BPS structure $(\Gamma, \bra - , - \ket, Z, \Omega)$, for a fixed K\"ahler form $\omega$ and complex parameter $\eps$, is given by the following choices:
\begin{enumerate}
\item[$\bullet$] the charge lattice is $\Gamma = \bigoplus^3_{i = 0} H_{2i}(X, \Z) \oplus E$, where the extra summand $E$ is a copy of $\Z$;
\item[$\bullet$] the intersection form $\bra - , -\ket$ is the Euler form on $\bigoplus^3_{i = 0} H_{2i}(X, \Z)$, extended by zero to $\Gamma$;
\item[$\bullet$] on $H_0(X, \Z) \oplus H_2(X, \Z)\oplus E \subset \bigoplus^3_{i = 0} H_{2i}(X, \Z) \oplus E$ the central charge is given by  
\begin{equation*}
Z(m, \beta, n) = Z((m, \beta,0,0), n) = \int_{\beta} (i\omega) - m +  n\eps, 
\end{equation*}
and this is extended by zero to the rest of $\Gamma$;
\item[$\bullet$] on $H_0(X, \Z) \oplus NE(X) \oplus E \subset \bigoplus^3_{i = 0} H_{2i}(X, \Z) \oplus E$ the BPS spectrum is given by
\begin{align*}
\Omega(m, \beta, n) &= \Omega((m, \beta, 0, 0), n)\\& = (-1)^n\chi(^{p}\mathcal{H}^n (\textbf{R}\pi_*\Phi_{M_{m}(\beta)})).
\end{align*}
The spectrum is then extended by the symmetry property
\begin{equation*}
\Omega(m, -\beta, n) = \Omega((m, -\beta, 0, 0), n) = \Omega(-m, \beta, - n), 
\end{equation*}
for all $\beta \in NE(X)$, and further extended by zero to the rest of the charge lattice $\Gamma$.
\end{enumerate}
\end{definition}
\begin{rmk} Note that the BPS invariants underlying our structure vanish if $\pm\beta$ is not an effective curve class. In particular we chose to set $\Omega(m, 0, n) = 0$ for all $m, n \in \Z$. A more natural choice would be to let instead $\Omega(m, 0, n) = -\chi(X)$, the invariant virtually enumerating zero-dimensional torsion sheaves (see e.g. \cite{js}, Section 6.3). But this choice is irrelevant for our main result Theorem \ref{MainThm}, which is concerned with the case of an effective curve class, and would have only complicated Definition \ref{MauTodaBPSDef}. The case $\beta = 0$ is already covered by the computation of Bridgeland and Iwaki in \cite{bridRH}, Section 6.3. 
\end{rmk}
\begin{lem} Assume the condition  \emph{\textbf{C$\omega$}}. Then the family of Maulik-Toda BPS structures parametrised by a K\"ahler form $\omega$ and a complex parameter $\eps$ is a framed variation of integral, uncoupled BPS structures.
\end{lem}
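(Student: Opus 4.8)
The plan is to unwind, one at a time, the three adjectives --- \emph{BPS structure}, \emph{integral}, \emph{uncoupled} --- and then the constancy and analyticity requirements in the definition of a \emph{framed variation}. None of these is deep once the structure of $\Gamma$ and of the Euler form on the Calabi--Yau threefold $X$ is in hand. First I would record that, for each fixed pair $(\omega,\eps)$, Definition~\ref{MauTodaBPSDef} indeed produces a BPS structure: $\Gamma=\bigoplus_{i=0}^3 H_{2i}(X,\Z)\oplus E$ has finite rank; the Euler form on $\bigoplus_{i=0}^3 H_{2i}(X,\Z)$ is $\Z$-valued and, since $X$ is Calabi--Yau of (odd) dimension $3$, skew-symmetric by Serre duality ($\chi(\mathcal E,\mathcal F)=-\chi(\mathcal F,\mathcal E)$), and extending it by zero to $E$ preserves both properties; the central charge $Z(m,\beta,n)=i\int_\beta\omega-m+n\eps$ is additive, hence a homomorphism $\Gamma\to\C$; and $\Omega$ has rational values. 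For integrality one observes that $\chi\bigl({}^p\mathcal H^n(\mathbf R\pi_*\Phi_{M_m(\beta)})\bigr)$, being the Euler characteristic of a cohomology object of a bounded constructible complex, lies in $\Z$, that the sign $(-1)^n$ preserves this, and that the symmetry prescription $\Omega(m,-\beta,n)=\Omega(-m,\beta,-n)$ together with the final extension by zero carry integrality to all of $\Gamma$. (In passing one checks that this prescription is self-consistent --- the only possible overlap being at $\beta=0$, where $NE(X)\cap(-NE(X))=\{0\}$ since $X$ is projective --- and that it makes the BPS structure symmetric, though symmetry is not needed here.)

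For uncoupledness the key point is that the support of $\Omega$ is contained in the sublattice $H_0(X,\Z)\oplus H_2(X,\Z)\oplus E$, i.e.\ in the classes whose $H_4$- and $H_6$-components vanish, and that the intersection form vanishes identically there: the summand $E$ lies in its radical by construction, while under Poincar\'e duality such classes are concentrated in cohomological degrees $\ge 4$, so by Hirzebruch--Riemann--Roch the Euler pairing of any two of them is the integral over $X$ of a class of degree $\ge 8>6=\dim_\R X$, hence $0$. Therefore $\bra\alpha,\alpha'\ket=0$ whenever $\Omega(\alpha),\Omega(\alpha')\neq 0$, and this holds uniformly in $(\omega,\eps)$, so each Maulik--Toda BPS structure is uncoupled.

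It then remains to verify the framed-variation conditions over the parameter space $M=\mathcal K_X\times\C$, with $\mathcal K_X\subset H^2(X,\R)$ the K\"ahler cone: $\Gamma$ and $\bra-,-\ket$ are manifestly independent of $(\omega,\eps)$; $\Omega$ does not involve $\eps$ at all, and its independence of $\omega$ is \emph{exactly} the hypothesis \textbf{C$\omega$}; and $Z_{(\omega,\eps)}(m,\beta,n)=i\int_\beta\omega-m+n\eps$ is real-linear in $\omega$ and holomorphic (indeed linear) in $\eps$, hence varies real-analytically over $M$. I do not expect any genuine technical obstacle. The one point that deserves emphasis is conceptual rather than computational: it is the forced uncoupledness established above --- together with \textbf{C$\omega$}, which removes the only remaining source of jumping in $\Omega$ --- that legitimises treating $\Gamma$, $\bra-,-\ket$ and $\Omega$ as frozen while $\omega$ crosses the walls of the Gieseker chamber structure, i.e.\ that makes it meaningful to speak of a \emph{framed} variation here at all.
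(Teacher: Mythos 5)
Your proposal is correct and follows essentially the same route as the paper's own (much terser) proof: integrality is immediate from the definition, uncoupledness comes from the support of $\Omega$ lying in $H_0\oplus H_2\oplus E$ where the pairing vanishes for degree reasons on a threefold, and the framed-variation conditions reduce to \textbf{C$\omega$} plus the evident real-analyticity of $Z$. Your added details (skew-symmetry via Serre duality, the Hirzebruch--Riemann--Roch degree count, consistency of the symmetry prescription) are all sound elaborations of what the paper leaves implicit.
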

\begin{proof} Consider a BPS structure in the family. It is clearly integral, by definition. Moreover the only elements of $\Gamma$ for which $\Omega$ does not vanish lie in $H_0(X, \Z) \oplus H_2(X, \Z)\oplus E \subset \bigoplus^3_{i = 0} H_{2i}(X, \Z) \oplus E$. The intersection form is trivial along $E$, by definition, and restricts to the Euler pairing on $H_0(X, \Z) \oplus H_2(X, \Z)$. But the latter pairing vanishes because $X$ is a threefold. It follows that each BPS structure in the family is uncoupled. Finally the central charge $Z$ varies real analytically with $\omega$, $\eps$, by definition, while under the condition \textbf{C$\omega$} the BPS spectrum $\Omega$ is constant.
\end{proof}
\begin{definition} Whenever we assume the condition \textbf{C$\chi$} we will write $\Omega_n(\beta)$ for the common value of all the invariants $\Omega(m, \beta, n)$, where $\beta \in \pm NE(X)$. 
\end{definition}
\begin{lem}\label{nSymmLem} Assume the condition \emph{\textbf{C$\chi$}}. Then we have
\begin{equation*}
\Omega_n(\beta) = \Omega_{-n}(\beta).
\end{equation*}
\end{lem}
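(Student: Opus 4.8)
The plan is to deduce the identity directly from property \eqref{SheafSymm}, using condition \textbf{C$\chi$} only to pass between the various moduli spaces $M_m(\beta)$. First I would treat the case $\beta \in NE(X)$. By \textbf{C$\chi$} the common value $\Omega_n(\beta)$ of the invariants $\Omega(m,\beta,n)$ may be computed using the fine moduli space $M_1(\beta)$, so that $\Omega_n(\beta) = (-1)^n\chi(^{p}\mathcal{H}^n(\textbf{R}\pi_*\Phi_{M_1(\beta)})) = (-1)^n\mathcal{S}_n(\beta)$. Since $n \in \Z$ we have $(-1)^n = (-1)^{-n}$, so property \eqref{SheafSymm}, that is $\mathcal{S}_n(\beta) = \mathcal{S}_{-n}(\beta)$, yields $\Omega_n(\beta) = (-1)^n\mathcal{S}_n(\beta) = (-1)^{-n}\mathcal{S}_{-n}(\beta) = \Omega_{-n}(\beta)$. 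Alternatively one can argue at the geometric level: $\Phi_{M_1(\beta)}$ is self-dual and $\pi$ is proper, hence $\textbf{R}\pi_*\Phi_{M_1(\beta)}$ is self-dual, so its perverse cohomology sheaves in degrees $n$ and $-n$ are Verdier dual and in particular have equal Euler characteristics.

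Then I would reduce the case $\beta \in -NE(X)$ to the previous one. Writing $\beta = -\beta'$ with $\beta' \in NE(X)$ and invoking the symmetric extension of $\Omega$ built into Definition \ref{MauTodaBPSDef}, namely $\Omega(m,-\beta',n) = \Omega(-m,\beta',-n)$, together with \textbf{C$\chi$} (so that the common value over all $m \in \Z$ equals the common value over all $-m \in \Z$), I get $\Omega_n(\beta) = \Omega_{-n}(\beta')$; replacing $n$ by $-n$ gives likewise $\Omega_{-n}(\beta) = \Omega_n(\beta')$. The first case applied to $\beta' \in NE(X)$ gives $\Omega_n(\beta') = \Omega_{-n}(\beta')$, and combining these identities yields $\Omega_n(\beta) = \Omega_{-n}(\beta)$.

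I do not expect any genuine obstacle here: the lemma is essentially a repackaging of \eqref{SheafSymm} once the definitions are unwound, and the sign bookkeeping is harmless because $(-1)^n$ depends only on the parity of $n$. The one point requiring care is the legitimacy of applying \textbf{C$\chi$}, which is needed both for $\Omega_n(\beta)$ to be well defined and for the symmetry \eqref{SheafSymm}, stated only for $M_1(\beta)$, to transfer to the invariants $\Omega(m,\beta,n)$ attached to arbitrary $m \in \Z$.
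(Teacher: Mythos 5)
Your proof is correct and follows essentially the same route as the paper: use \textbf{C$\chi$} to compute $\Omega_n(\beta)$ from the fine moduli space $M_1(\beta)$, then invoke the self-duality of $\Phi_{M_1(\beta)}$ and Verdier duality (equivalently, property \eqref{SheafSymm}). Your explicit treatment of the case $\beta \in -NE(X)$ via the symmetric extension in Definition \ref{MauTodaBPSDef} is a small extra care that the paper's one-line proof leaves implicit, but it changes nothing of substance.
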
 
\begin{proof} Under the condition \textbf{C$\chi$} we have 
\begin{equation*}
\Omega_n(\beta) = (-1)^n\chi(^{p}\mathcal{H}^n (\textbf{R}\pi_*\Phi_{M_{1}(\beta)})).
\end{equation*}
The required symmetry with respect to $n$ then follows from the self-duality of the object $\Phi_{M_{1}(\beta)}$ and Verdier duality, as observed by Maulik and Toda \cite{mautoda}.
\end{proof}

\section{Main result}\label{MainSec} 
In this Section we introduce and prove our main result, a more precise version of Theorem \ref{MainThmIntro} in the Introduction.

We consider a Maulik-Toda BPS structure in the sense of Definition \ref{MauTodaBPSDef}, for a fixed K\"ahler form $\omega$ and \emph{real} parameter $\eps \in \R$. 

Let $\nabla^{\rm{BPS}}$ denote the corresponding BPS flat cannection. As discussed in Section \ref{BackSec} this is a well-defined flat connection on the trivial principal bundle over $\PP^1$, with structure group $\Aut(\C[\Gamma]\pow{s})$, and with singularities at $t = 0$, $t = \infty$ (denoting by $t$ a coordinate on $\C^* \subset \PP^1$). Note that as we are assuming $\eps \in \R$ the real line does not contain central charges $Z(m, \beta, n)$. A canonical flat section along the positive real line $\R_{>0}$ is a real analytic function $X(t)$, with values in $\Aut(\C[\Gamma]\pow{s})$, which can be described explicitly. 
\begin{definition}\label{FrequDef} For each element 
\begin{equation*}
(m, \beta, n) \in H_0(X, \Z) \oplus NE(X) \oplus E
\end{equation*}
we introduce two functions $\operatorname{F}^{\pm}_{m, \beta, n}(t,Z)$, given by
\begin{align*}
&\operatorname{F}^{\pm}_{m, \beta, n}(t,Z) =\\&= \pm \Omega(m,\beta,n)\frac{t}{2\pi i}  \int_{\pm\R_{>0}Z(m,\beta,n)} \frac{\log(1 -  s^{||(m, \beta, n)||} x_{\pm(m, \beta, n)} e^{\mp Z(m,\beta,n)/z})}{z-t}\frac{dz}{z}.
\end{align*}
It is straightforward to check that $\operatorname{F}^{\pm}_{m, \beta, n}(t)$ are well-defined functions of $t \in \R_{>0}$ with values in $\C[\Gamma]\pow{s}$.
\end{definition}
Let $[D] \in H_4(X, \Z)$ denote a divisor class, with a corresponding monomial $x_{[D]} \in \C[\Gamma]$. 
\begin{lem} A canonical flat section $X(t)$ of $\nabla^{\rm BPS}$ along the positive real line is determined by the identity
\begin{equation*}
 X(t)( x_{[D]}) = x_{[D]}\exp\left(\sum_{\beta \in NE(X)} \bra [D], \beta \ket \left(\sum_{m, n\in\Z}\operatorname{F}^+_{m, \beta, n}(t,Z) + \operatorname{F}^-_{m, \beta, n}(t,Z)\right)\right).
\end{equation*}
\end{lem}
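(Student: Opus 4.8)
The plan is to specialise the explicit formula \eqref{GeneralRHSol} for the canonical flat section of $\nabla^{\rm BPS}$ along a ray to the ray $r = \R_{>0}$ and the generator $x_{[D]}$ with $[D] \in H_4(X,\Z)$, and then to reorganise the resulting sum over the charge lattice $\Gamma$ into the sum over effective curve classes appearing in the statement.

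First I would check that $\R_{>0}$ is an admissible ray for \eqref{GeneralRHSol}, i.e. that it is distinct from $\R_{<0}$ and from every ray $\ell$ with $\mathbb{S}_\ell \neq \operatorname{Id}$. Such an active ray passes through some $Z(m,\beta,n)$ with $\Omega(m,\beta,n) \neq 0$; since $\eps \in \R$, Definition \ref{MauTodaBPSDef} gives $Z(m,\beta,n) = i\int_\beta \omega + (n\eps - m)$, whose imaginary part $\int_\beta\omega$ is strictly positive when $0 \neq \beta \in NE(X)$ and strictly negative for the reflected classes with $-\beta$, while $\Omega(m,0,n) = 0$ because there are no one-dimensional sheaves with vanishing fundamental class. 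So the active rays lie in the open upper and lower half-planes, and $\R_{>0}$ (and $\R_{<0}$) are admissible; hence $X(t) := X_{\R_{>0}}(t)$ given by \eqref{GeneralRHSol} is the canonical flat section along $\R_{>0}$, which exists and is unique by the discussion in Section \ref{BackSec}, so it suffices to compute its value on $x_{[D]}$. Evaluating \eqref{GeneralRHSol} at $\alpha = [D]$ and using that the central charge vanishes on $H_4(X,\Z)$ gives
\begin{equation*}
X(t)(x_{[D]}) = x_{[D]}\,\exp\Big(\sum_{\beta \in \Gamma} \bra [D], \beta \ket \operatorname{F}_\beta(t)\Big),
\end{equation*}
so, up to the harmless prefactor $x_{[D]}$, the Lemma reduces to identifying the exponent with the double sum in the statement.

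The next step is to restrict the sum to the support of $\Omega$. Since $\operatorname{F}_\beta$ carries a factor $\Omega(\beta)$, only $\beta \in \pm(H_0(X,\Z) \oplus NE(X) \oplus E)$ contribute, and I would pair each $\gamma = (m,\beta,n)$ with $\beta\in NE(X)$ with its negative $-\gamma$. Two elementary facts then do the work. First, the intersection form is the Euler form on $\bigoplus_i H_{2i}(X,\Z)$ extended by zero to $E$, and on a threefold this pairs a divisor class $[D] \in H_4$ with a class supported in $H_0 \oplus H_2$ to give (up to a fixed sign) the intersection number $D\cdot\beta$, with no dependence on the $H_0$- or $E$-components; thus $\bra [D],(m,\beta,n)\ket = \bra [D], \beta \ket$. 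Second, under $\gamma \mapsto -\gamma$ one has $Z(-\gamma) = -Z(\gamma)$, $||-\gamma|| = ||\gamma||$, $\R_{>0}Z(-\gamma) = -\R_{>0}Z(\gamma)$, and $\Omega(-\gamma) = \Omega(\gamma)$ on the support (the symmetry $\Omega(m,-\beta,n) = \Omega(-m,\beta,-n)$ of Definition \ref{MauTodaBPSDef}). Comparing with Definition \ref{FrequDef}, these identities yield $\operatorname{F}_{(m,\beta,n)}(t) = \operatorname{F}^{+}_{m,\beta,n}(t,Z)$ and $\operatorname{F}_{-(m,\beta,n)}(t) = -\operatorname{F}^{-}_{m,\beta,n}(t,Z)$; combined with $\bra [D],-\gamma\ket = -\bra [D],\gamma\ket$, the total contribution of the pair $\{\gamma,-\gamma\}$ is exactly $\bra [D], \beta \ket\big(\operatorname{F}^{+}_{m,\beta,n}(t,Z) + \operatorname{F}^{-}_{m,\beta,n}(t,Z)\big)$, the sign in $\operatorname{F}_{-\gamma} = -\operatorname{F}^{-}$ being absorbed by the skew-symmetry of $\bra -,-\ket$. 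Summing over $m,n\in\Z$ and over $\beta\in NE(X)$ produces the claimed identity.

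Finally I would check that these regroupings are legitimate in $\C[\Gamma]\pow{s}$: modulo $s^{N+1}$ a term $\operatorname{F}_\gamma$ contributes only if $||\gamma|| \leq N$, and there are finitely many $\gamma \in \Gamma$ of bounded norm, so each coefficient of a power of $s$ is a finite sum and the order of summation is immaterial. The one genuinely delicate point is the sign bookkeeping in the previous paragraph — keeping straight the interplay between $\gamma\mapsto-\gamma$, the orientations of the rays $\pm\R_{>0}Z(\gamma)$, the exponentials $e^{\mp Z(\gamma)/z}$, and the skew-symmetry of $\bra -,-\ket$ — but this is precisely what the signs built into Definition \ref{FrequDef} are arranged to absorb, and once it is set up correctly the computation is routine.
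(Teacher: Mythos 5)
Your proposal is correct and follows the same route as the paper, whose entire proof is the one-line observation that the formula is a special case of the general solution \eqref{GeneralRHSol}; you simply carry out the specialisation explicitly, including the pairing of $\gamma=(m,\beta,n)$ with $-\gamma$ and the sign bookkeeping that identifies $\operatorname{F}_{\pm\gamma}$ with $\pm\operatorname{F}^{\pm}_{m,\beta,n}$, all of which checks out. Your aside about the prefactor $x_{[D]}$ (and the factor $e^{Z([D])/t}=1$) correctly accounts for the only discrepancy between \eqref{GeneralRHSol} and the displayed identity.
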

\begin{proof} This is a special case of the general solution \eqref{GeneralRHSol}.
\end{proof}
Our main result concerns a specialisation of this explicit formula for canonical solutions of $\nabla^{\rm BPS}$. It is convenient to introduce the function
\begin{equation*}
\left[2\sin(x/2))^{-2}\right]_{+} = (2\sin(x/2))^{-2} - \frac{1}{x^2} + \frac{1}{6}, 
\end{equation*}
as well as the notation
\begin{equation*}
v_{\beta} = i\int_{\beta} \omega. 
\end{equation*}
\begin{thm}\label{MainThm} Assume the condition \emph{\textbf{C$\chi$}}. For each $\beta \in NE(X)$, consider the specialisation given by
\begin{align*}
&\operatorname{F}_{\beta}(t,Z)=\\& = \sum_{m, n\in\Z} \big(\operatorname{F}^+_{m, \beta, n}(t, Z) + \operatorname{F}^-_{m, \beta, n}(t, Z)\big) | \big( s = x_{(m, \beta, n)} = 1\big).
\end{align*} 
Then, $\operatorname{F}_{\beta}(t, Z)$ is a well-defined function of $t, Z$, and it satisfies the differential equation
\begin{align*}
&\frac{i}{2\pi}\frac{\del}{\del t}\operatorname{F}_{\beta}(t, Z)|\big(\eps = \frac{u}{2\pi}, t = \frac{u}{ (2\pi)^2}\big) = \\&= \frac{\del}{\del v_{\beta}}\sum_{g\geq0}n_{g,\beta} \sum_{r >0}\frac{1}{r}(2 \sin(ru/2))^{2g} \left[2\sin(r u/2))^{-2}\right]_{+} e^{2\pi i r  v_{\beta} }
\end{align*}
in the sense of formal power series in the variables $u$, $e^{2\pi i v_{\beta}}$.
\end{thm}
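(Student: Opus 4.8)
The plan is to evaluate $\operatorname{F}_\beta(t,Z)$ in closed form by carrying out, in turn, the three sums in its definition (over the multi‑cover index of the logarithm, over $m$, and over $n$), and then to differentiate in $t$ and compare with the right‑hand side.

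I would begin by substituting the formal expansion $\log(1-w) = -\sum_{k\ge 1}w^k/k$ into the functions $\operatorname{F}^{\pm}_{m,\beta,n}$, performing the specialisation $s = x_{(m,\beta,n)} = 1$, and applying the change of variables $z = r\,Z(m,\beta,n)$ followed by $\zeta = 1/r$; this turns each contribution into an absolutely convergent Laplace integral of the shape $\int_0^\infty e^{-k\zeta}\,(Z\mp t\zeta)^{-1}\,d\zeta$ with $\zeta$ ranging over $\R_{>0}$, where $Z = Z(m,\beta,n) = v_\beta + n\eps - m$ and $\operatorname{Im}v_\beta > 0$. It is convenient to differentiate in $t$ already at this stage, since $\partial_t\big(t/(Z\mp t\zeta)\big) = Z/(Z\mp t\zeta)^2$ produces squared denominators. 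The crucial step is then the sum over $m\in\Z$, the ``$D0$‑brane tower'' resummation, carried out by means of the classical identities
\[
\sum_{m\in\Z}\frac{1}{w-m} = \pi\cot\pi w,\qquad \sum_{m\in\Z}\frac{1}{(w-m)^2} = \pi^2\csc^2\pi w,
\]
valid in the symmetric sense, together with their Fourier expansions in powers of $e^{2\pi i w}$. Here $w$ is of the form $Z_0\mp t\zeta$ with $Z_0 = v_\beta + n\eps$ and $\operatorname{Im}v_\beta > 0$, so these expansions converge in the $NE(X)$‑adic topology and exhibit $\operatorname{F}_\beta$ as a power series in $e^{2\pi i v_\beta}$; this already gives the first assertion of the theorem.

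Next I would resum over $n\in\Z$. Under condition \textbf{C$\chi$} the numbers $\Omega_n(\beta)$ are independent of $m$, vanish for $|n|\gg 0$, and by definition equal $(-1)^n\mathcal S_n(\beta)$, so \eqref{SheafSymm}, \eqref{SheafGV} and Lemma \ref{nSymmLem} yield $\sum_{n\in\Z}\Omega_n(\beta)\,y^n = \sum_{g\ge 0}n_{g,\beta}\,(2-y-y^{-1})^g$; evaluated at $y = e^{2\pi i j\eps}$ this produces the genus factor $\sum_{g\ge 0}n_{g,\beta}(2\sin\pi j\eps)^{2g}$. The remaining $\zeta$‑integrals are elementary, and after summing over the multi‑cover index $k\ge 1$ one is left, for each Fourier index $j$, with a single series in $k$ of Mellin type; performing the substitution $\eps = u/(2\pi)$, $t = u/(2\pi)^2$ — which correlates the two parameters so that every trigonometric argument becomes $ju/2$ — and invoking the classical partial‑fraction expansions of $\cot$ and $\csc^2$ should collapse this $k$‑series precisely into the regularised kernel $[2\sin(ju/2)^{-2}]_+$. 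Writing $r = j$ and using $-i\,\partial_{v_\beta}e^{2\pi i r v_\beta} = 2\pi r\,e^{2\pi i r v_\beta}$ then produces the stated differential equation.

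I expect the main difficulty to be twofold. First, every interchange in the argument above — of the sums over the multi‑cover index, over $m$ and over $n$ with the $\zeta$‑integral and with $\partial_t$, and above all the passage $\sum_m(w-m)^{-\bullet}\rightsquigarrow$ the cotangent and cosecant functions — must be justified in the appropriate topology: term by term in the formal variable $\eps$, formally in the $NE(X)$‑adic variable $e^{2\pi i v_\beta}$, and locally uniformly for $t\in\R_{>0}$; this is exactly the analytic content underlying the statement that $\operatorname{F}_\beta$ is a well‑defined function of $t$ with values in the relevant ring of formal power series. Second, one has to keep careful track of all signs and of the factors of $2\pi i$ coming from the normalisation of the central charge, and in particular to identify correctly the pieces that assemble into the regularisation built into $[2\sin(ju/2)^{-2}]_+$ together with the contribution of the constant Fourier mode; it is here that the split between the genuine Gopakumar–Vafa form of the partition function and the genus‑$0$ and genus‑$1$ error terms becomes visible, recovering and extending to all genera the genus‑$0$ computation of Bridgeland and Iwaki.
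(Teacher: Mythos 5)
Your proposal follows essentially the same route as the paper's proof: expand the logarithm in the multi--cover index $k$, Poisson--resum the tower over the $D0$--charge $m\in\Z$ to produce Fourier modes $e^{2\pi i j v_{\beta}}$, perform the $n$--sum via \eqref{SheafGV} and Lemma \ref{nSymmLem} to obtain the genus factor $(2\sin(\pi j \eps))^{2g}$, and finally identify the residual $k$--series with the regularised kernel. The points where you deviate are equivalent reformulations rather than new ideas: the partial--fraction identities $\sum_{m}(w-m)^{-1}=\pi\cot\pi w$ and $\sum_{m}(w-m)^{-2}=\pi^{2}\csc^{2}\pi w$, applied after passing to the Laplace form and differentiating in $t$, are precisely the Poisson summation that the paper implements with the Dirac comb after rotating the contour to $i\R_{>0}$; both yield the same intermediate expression \eqref{SumOver_n_Equ}, and your justification of well--definedness via $\operatorname{Im} v_{\beta}>0$ matches the paper's observation that the $g$--sum is finite and the $k$--sum converges.

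The one step to treat with more care is the last one. The $k$--series $\sum_{k>0} t/(k^{2}+(2\pi j t)^{2})$ does admit an elementary closed form, but for real $t$ it is the \emph{hyperbolic} expression $\bigl(2\pi^{2}jt\coth(2\pi^{2}jt)-1\bigr)/(8\pi^{2}j^{2}t)$ --- exactly what Section \ref{Large_t_Sec} exploits --- so the partial--fraction expansion of $\csc^{2}$ does not literally ``collapse'' it into $[(2\sin(ju/2))^{-2}]_{+}$. The identification with the trigonometric kernel is an identity of formal power series in $u$: one expands the rational function of $t$ around $t=0$, evaluates $\sum_{k>0}k^{-2s}=\zeta(2s)$ in terms of Bernoulli numbers, and matches the resulting series $\sum_{s\geq 1}(-1)^{s-1}\tfrac{|B_{2s}|}{(2s)(2s-2)!}(ju)^{2s-2}$ against the Taylor expansion of the kernel, which is how the paper concludes. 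Since the theorem is a formal--power--series statement this is not a gap in substance, but your phrase ``should collapse precisely'' conceals the sign bookkeeping (the factor $(-1)^{s-1}$, i.e.\ the passage between the hyperbolic and trigonometric expansions) where it is easiest to go wrong, and you should make that final matching explicit rather than appeal to a closed--form evaluation.
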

\begin{rmk} The formal power series 
\begin{equation*}
\sum_{g\geq0}n_{g,\beta} \sum_{r >0}\frac{1}{r}(2 \sin(ru/2))^{2g-2} e^{2\pi i r  v_{\beta} }
\end{equation*}
is precisely the contribution of $\beta$ to the Gromov-Witten partition function written in Gopaku\-mar-Vafa form. Theorem \ref{MainThm} says that this can be easily computed from the asymptotics of a flat section of $\nabla^{\rm BPS}$, through the formal power series $\operatorname{F}_{\beta}(u)$, up to the missing term
\begin{equation*}
\sum_{g\geq0}n_{g,\beta} \sum_{r >0}\frac{1}{r}(2 \sin(ru/2))^{2g} \left(\frac{1}{r^2u^2 }- \frac{1}{6}\right) e^{2\pi i r  v_{\beta} }.
\end{equation*}
Note that the term $1/(r^2 u^2)$ is the contribution from a ``single" genus $0$ Gopakumar-Vafa invariant (i.e. a contribution of $1$ to $n_{0, \beta}$) to Gromov-Witten invariants of genus $0$. This also holds for the term $-1/6$ up to a factor of $2$; in this case the contribution is to Gromov-Witten invariants of genus $1$.  
\end{rmk}
\begin{rmk} In order to help comparison of this result with the computation by Bridgeland and Iwaki in \cite{bridRH} Section 6.3, we note that setting $s = 1$ simply gets rid of the formal parameters needed to make $X(t)$ well-defined in the sense of formal power series, while the specialisation $x_{(m, \beta, n)} = 1$ corresponds to the choice of asymptotic behaviour $\xi(\gamma) = 1$ for $\Omega(\gamma) \neq 0$ appearing in \cite{bridRH}, Theorem 3.2. Finally, the differential equation satisfied by $\operatorname{F}_{\beta}(u)$ is very close to the notion of tau function discussed in \cite{bridRH} Section 3.4, although there is no analogue of our auxiliary variable $\eps$ in that case. 
\end{rmk}
\begin{proof}[Proof of Theorem \ref{MainThm}] According to Definition \ref{FrequDef} we have
\begin{align*}
&\big(\operatorname{F}^+_{m, \beta, n}(t, Z) + \operatorname{F}^-_{m, \beta, n}(t, Z)\big) | \big( s = x_{(m, \beta, n)} = 1\big)=\\
&= \Omega(m,\beta,n)\frac{t}{2\pi i}  \int_{\R_{>0}Z(m,\beta,n)} \frac{\log(1 - e^{- Z(m,\beta,n)/z})}{z-t}\frac{dz}{z}\\
& -\Omega(m,\beta,n)\frac{t}{2\pi i}  \int_{\R_{<0}Z(m,\beta,n)} \frac{\log(1 - e^{Z(m,\beta,n)/z})}{z-t}\frac{dz}{z}.
\end{align*}
Assuming the condition \textbf{C$\chi$} we have $\Omega(m,\beta,n) = \Omega_n(\beta)$ for all $m \in \Z$. Moreover we recall that the central charge is given explicitly by
\begin{align*}
Z(m, \beta, n) = \int_{\beta} (i\omega) - m + n\eps = v_{\beta} - m + n\eps.
\end{align*} 
Therefore
\begin{align*}
&\big(\operatorname{F}^+_{m, \beta, n}(t,Z) + \operatorname{F}^-_{m, \beta, n}(t,Z)\big) | \big( s = x_{(m, \beta, n)} = 1\big)=\\
&= \Omega_n(\beta)\frac{t}{2\pi i}  \int_{\R_{>0}(v_{\beta} - m + n\eps)} \frac{\log(1 - e^{- (v_{\beta} - m + n\eps)/z)})}{z-t}\frac{dz}{z}\\
& -\Omega_n(\beta)\frac{t}{2\pi i}  \int_{\R_{<0}(v_{\beta} - m + n\eps)} \frac{\log(1 - e^{(v_{\beta} - m + n\eps)/z)})}{z-t}\frac{dz}{z}.
\end{align*}
Recall we are assuming that $t$ is a real variable and $\eps$ is a real parameter. Therefore it is possible to deform the integration ray $\R_{>0}(v_{\beta} - m + n\eps)$ to the ray $i\R_{>0}$ within the open upper half-plane, without enclosing poles of the integrand. Similarly we can deform $\R_{<0}(v_{\beta} - m + n\eps)$ to $i\R_{<0}v_{\beta}$. As a consequence we may rewrite
\begin{align*}
&\big(\operatorname{F}^+_{m, \beta, n}(t,Z) + \operatorname{F}^-_{m, \beta, n}(t,Z)\big) | \big( s = x_{(m, \beta, n)} = 1\big)=\\
&= \Omega_n(\beta)\frac{t}{2\pi i}  \int_{i\R_{>0} } \frac{\log(1 - e^{- (v_{\beta} - m + n\eps)/z)})}{z-t}\frac{dz}{z}\\
& -\Omega_n(\beta)\frac{t}{2\pi i}  \int_{i\R_{<0} } \frac{\log(1 - e^{(v_{\beta} - m + n\eps)/z)})}{z-t}\frac{dz}{z}.
\end{align*}
Expanding the logarithm into a convergent power series and summing over the finitely many values of $n \in \Z$ for which $\Omega_n(\beta)\neq 0$, as well as over $|m| \leq M$ for a cutoff integer $M >0$, gives a well defined function
\begin{align*} 
&\operatorname{F}^{\leq M}_{\beta}(t, Z) = \sum_{n \in \Z, |m| \leq M} \big(\operatorname{F}^+_{m, \beta, n}(t,Z) + \operatorname{F}^-_{m, \beta, n}(t,Z)\big) | \big( s = x_{(m, \beta, n)} = 1\big)\\
&=-\sum_{n \in \Z} \frac{1}{2\pi i} \Omega_{n}(\beta) \int_{i\R_{>0} } \frac{dz}{z}\frac{t}{z-t}\sum_{k>0}\frac{1}{k} e^{ k n \frac{\eps}{z}}   \sum_{|m|\leq M} e^{-k (v_{\beta}-m)/z} \\
& +\sum_{n \in \Z} \frac{1}{2\pi i} \Omega_{n}(\beta) \int_{i\R_{<0} } \frac{dz}{z}\frac{t}{z-t}\sum_{k>0}\frac{1}{k}e^{ k n \frac{\eps}{z}}   \sum_{|m|\leq M}e^{k(v_{\beta}-m)/z}.
\end{align*}
Making the change of variable $z \mapsto -z$ in the integrals over $i\R_{<0}$ gives
\begin{align*} 
\operatorname{F}^{\leq M}_{\beta}(t, Z) &=-\sum_{n \in \Z} \frac{1}{2\pi i} \Omega_{n}(\beta) \int_{i\R_{>0}} \frac{dz}{z}\frac{t}{z-t}\sum_{k>0}\frac{1}{k}e^{ k  n \frac{\eps}{z}}   \sum_{|m| \leq M} e^{-k (v_{\beta}-m)/z} \\
&  +\sum_{n \in \Z} \frac{1}{2\pi i} \Omega_{n}(\beta) \int_{i\R_{>0}} \frac{dz}{z}\frac{t}{-z-t}\sum_{k>0}\frac{1}{k}e^{- k n \frac{\eps}{z}}   \sum_{|m|\leq M}e^{-k(v_{\beta}-m)/z}.
\end{align*}
By Lemma \ref{nSymmLem}, i.e. by the symmetry $\Omega_{n}(\beta) = \Omega_{-n}(\beta)$, we have
\begin{align*} 
\operatorname{F}^{\leq M}_{\beta}(t, Z) &=-\sum_{n \in \Z} \frac{1}{2\pi i} \Omega_{n}(\beta) \int_{i\R_{>0} } \frac{dz}{z}\frac{t}{z-t}\sum_{k>0}\frac{1}{k}e^{ k  n \frac{\eps}{z}}   \sum_{|m|\leq M} e^{-k (v_{\beta}-m)/z} \\
&  +\sum_{n \in \Z} \frac{1}{2\pi i} \Omega_{n}(\beta) \int_{i\R_{>0} } \frac{dz}{z}\frac{t}{-z-t}\sum_{k>0}\frac{1}{k}e^{k n \frac{\eps}{z}}   \sum_{|m|\leq M}e^{-k(v_{\beta}-m)/z} 
\end{align*}
(notice the change of sign in the second integrand). Collecting terms we find 
\begin{align*} 
&\operatorname{F}^{\leq M}_{\beta}(t, Z)=\\
&=-\sum_{n \in \Z} \frac{1}{2\pi i} \Omega_{n}(\beta) \sum_{k>0}\frac{1}{k}\int_{i\R_{>0}} \frac{dz}{z}\left( \frac{t}{z-t} - \frac{t}{-z-t}\right) e^{  k n \frac{\eps}{z}}\sum_{|m|\leq M} e^{-k (v_{\beta}-m)/z}\\
&=-\sum_{n \in \Z} \frac{1}{2\pi i} \Omega_{n}(\beta) \sum_{k>0}\frac{1}{k}\int_{i\R_{>0}} dz \frac{2t}{z^2-t^2} e^{  k n \frac{\eps}{z}}\sum_{|m|\leq M} e^{-k (v_{\beta}-m)/z}.
\end{align*}
Writing $z = i p$ for a real variable $p$ gives
\begin{align*} 
\operatorname{F}^{\leq M}_{\beta}(t, Z)
&= \frac{1}{\pi} \sum_{n \in \Z} \Omega_{n}(\beta)\sum_{k>0}\frac{1}{k} \int_{\R_{>0}} d p \frac{ t}{p^2 + t^2} e^{-i k n \frac{\eps}{p}}\sum_{|m|\leq M} e^{i k (v_{\beta}-m)/p}\\
&= \frac{1}{\pi} \sum_{n \in \Z}  \Omega_{n}(\beta)\sum_{k>0}\frac{1}{k}\int_{\R_{>0}} dp \frac{ t}{1+ (p t)^2} e^{-i k n p \eps}\sum_{|m|\leq M} e^{i k (v_{\beta}-m) p},
\end{align*}
where the last equality follows from the change of variable $p \mapsto p^{-1}$. As $M \to +\infty$ the function
\begin{equation*}
\frac{k}{2\pi}\sum_{|m| \leq M} e^{ i p k m}  
\end{equation*}
converges in the sense of tempered distributions to the Dirac comb 
\begin{equation*}
\sum_{m \in \Z} \delta(p - \frac{2\pi}{k} m).
\end{equation*}
Moreover for fixed $t$ the function 
\begin{equation*}
\left| \frac{ t}{1+ (p t)^2} e^{-i k n p \eps}e^{i k v_{\beta}  p}\right| 
\end{equation*}
is of rapid decay as $p \to +\infty$. So we find a well defined limit function
\begin{align}\label{SumOver_n_Equ}
\nonumber\operatorname{F}_{\beta}(t, Z) &= \lim_{M \to +\infty}\operatorname{F}^{\leq M}_{\beta}(t, Z) \\&= 2\sum_{n \in \Z}   \Omega_{n}(\beta)\sum_{k>0}\frac{1}{k^2}  \sum_{m > 0} \frac{ t}{1+ (\frac{2 \pi m}{k} t)^2}  e^{ - 2\pi i m n \eps } e^{2\pi i m  v_{\beta} }. 
\end{align}
Note that we are only summing over $m > 0$ since each integral over $p$ is performed along $\R_{>0}$, so contributions from  $\delta(p - \frac{2\pi}{k} m)$ with $m \leq 0$ vanish. 

Now the crucial property \eqref{SheafGV}, expressing the relation between sheaf-theoretic and Gopakumar-Vafa invariants, and the fact that by Definition \ref{MauTodaBPSDef} we have  $\Omega_n(\beta) = (-1)^n\mathcal{S}_{n}(\beta)$, imply
\begin{align}\label{mautodaGVSigned}
\nonumber\sum_{n\in\Z} \Omega_{n}(\beta)y^n &= \sum_{g\geq0} n_{g, \beta} ((-y)^{1/2} + (-y)^{-1/2})^{2g}\\
\nonumber &= \sum_{g\geq0} n_{g, \beta} (i y ^{1/2} - i y^{-1/2})^{2g}\\
&= \sum_{g\geq0} n_{g, \beta} (-1)^{g}(y ^{1/2} - y^{-1/2})^{2g}.
\end{align}
Note that in fact this identity is independent of the choice $(-1)^{1/2} = \pm i$. According to \eqref{mautodaGVSigned}, for fixed $k, m$ we can perform the sum over $n$ in \eqref{SumOver_n_Equ} as
\begin{align*}
&\sum_{n\in \Z} \Omega_{n}(\beta) e^{- 2\pi i m n \eps } =  \sum_{g \geq 0} n_{g,\beta} (-1)^g(e^{ -\pi i m \eps } - e^{\pi i m \eps})^{2g}.
\end{align*}
This shows
\begin{equation}\label{SumOver_g_Equ}
\operatorname{F}_{\beta}(t, Z)= 2 \sum_{g\geq0}n_{g,\beta} (-1)^g \sum_{m>0}(e^{ - i\pi m \eps } - e^{i \pi m \eps})^{2g} e^{2\pi i m  v_{\beta} } \sum_{k>0}\frac{1}{k^2} \frac{ t}{1+ (\frac{2 \pi m}{k} t)^2}. 
\end{equation}
Expanding the rational function of $t$ and using the well-known identity 
\begin{equation*}
\sum_{k>0}\frac{1}{k^{2s}} = \zeta(2s) =  \frac{|B_{2s}|(2\pi)^{2s}}{2(2s)!} 
\end{equation*}
we obtain 
\begin{align*} 
\operatorname{F}_{\beta}(t, Z) &= \frac{1}{\pi} \sum_{g\geq0}n_{g,\beta} (-1)^g\sum_{m>0}(e^{ -i\pi m \eps } - e^{i\pi m \eps})^{2g} e^{2\pi i m  v_{\beta} }\\&\quad\quad\quad \sum_{r\geq0} (-1)^r m^{2r} (2\pi t)^{2r+1} \sum_{k>0} \frac{1}{k^{2r+2}}\\
&= \frac{1}{\pi} \sum_{g\geq0}n_{g,\beta} (-1)^g\sum_{m>0}(e^{ - i\pi  m \eps } - e^{i\pi m \eps})^{2g} e^{2\pi i m  v_{\beta} }\\&\quad\quad\quad \sum_{s\geq1} (-1)^{s-1} m^{2s-2} (2\pi t)^{2s-1} \sum_{k>0} \frac{1}{k^{2s}}\\
&= \frac{1}{\pi} \sum_{g\geq0}n_{g,\beta} (-1)^g\sum_{m>0}(e^{ - i\pi  m \eps } - e^{i\pi m \eps})^{2g} e^{2\pi i m  v_{\beta} }\\&\quad\quad\quad  \sum_{s\geq1} (-1)^{s-1} m^{2s-2}  \frac{|B_{2s}|(2\pi)^{2s}}{2(2s)!}(2\pi t)^{2s-1}.
\end{align*} 
In particular we note the identity
\begin{align}\label{DiffEquProof}
\nonumber\int\frac{\del}{\del t}\operatorname{F}_{\beta}(t, Z) dv_{\beta} &= -2\pi i \sum_{g\geq0}n_{g,\beta} (-1)^g\sum_{m > 0}\frac{1}{m}(e^{ -i\pi m \eps } - e^{ i\pi m \eps})^{2g} e^{2\pi i m  v_{\beta} }\\&  \quad\quad\sum_{s\geq1} (-1)^{s-1}  \frac{|B_{2s}|}{(2s)(2s-2)!}(4\pi^2 m t)^{2s-2}.
\end{align}
Let us introduce a new formal parameter $u$, which plays the role of the Gromov-Witten coupling. It is related to the variables $\eps$, $t$ by
\begin{equation*}
\eps = \frac{u}{2\pi},\, t = \frac{u}{ (2\pi)^2}.
\end{equation*} 
With this specialisation, we may rewrite \eqref{DiffEquProof} as the differential equation 
\begin{align*}
&\frac{\del}{\del t}\operatorname{F}_{\beta}(t, Z)|\big(\eps = \frac{u}{2\pi}, t = \frac{u}{ (2\pi)^2}\big) = \\&=-2\pi i\frac{\del}{\del v_{\beta}}\sum_{g\geq0}n_{g,\beta} (-1)^g\sum_{m>0}\frac{1}{m}(2 i \sin(mu/2))^{2g} \\&\quad\quad\quad\,\,\,\sum_{s\geq1} (-1)^{s-1}  \frac{|B_{2s}|}{(2s)(2s-2)!}(m u)^{2s-2}  e^{2\pi i m  v_{\beta} }.
\end{align*}
We use the standard expansion around $x = 0$ 
\begin{equation*}
(2\sin(x/2))^{-2} = \frac{1}{x^2} - \frac{1}{12} + \sum_{s\geq2} (-1)^{s-1}  \frac{|B_{2s}|}{(2s)(2s-2)!}x^{2s-2} 
\end{equation*} 
and the identity $ |B_2| = \frac{1}{6} $ to compute
\begin{equation*}
\sum_{s\geq1} (-1)^{s-1}  \frac{|B_{2s}|}{(2s)(2s-2)!}(m u)^{2s-2} = (2\sin(mu/2))^{-2} - \frac{1}{m^2 u^2} + \frac{1}{6}.
\end{equation*}
Thus the differential equation becomes, after a little simplification
\begin{align*}
&\frac{\del}{\del t}\operatorname{F}_{\beta}(t, Z)|\big(\eps = \frac{u}{2\pi}, t = \frac{u}{ (2\pi)^2}\big) = \\&=-2\pi i \frac{\del}{\del v_{\beta}}\sum_{g\geq0}n_{g,\beta} (-1)^g\sum_{m>0}\frac{1}{m}(2 i \sin(mu/2))^{2g} \\&\quad\quad\quad\left((2\sin(mu/2))^{-2} - \frac{1}{m^2 u^2} + \frac{1}{6}\right)  e^{2\pi i m  v_{\beta} }.
\end{align*}
This completes the proof.
\end{proof}

\begin{rmk} Fixing $t = u/(2\pi)^2$ and taking the limit $\eps \to 0$ in \eqref{DiffEquProof} gives just the genus $0$ contribution
 \begin{equation*} 
2\pi i n_{0,\beta} \sum_{m>0}\frac{1}{m}\left((2\sin(mu/2))^{-2} - \frac{1}{m^2 u^2} + \frac{1}{6}\right) e^{2\pi i m  v_{\beta} }.
\end{equation*}
This particular limit essentially reproduces Bridgeland and Iwaki's calculation in \cite{bridRH} Section 6.3. 
\end{rmk}
\section{Large $t$ behaviour}\label{Large_t_Sec}
Let us recall the identity \eqref{SumOver_g_Equ} from the proof of Theorem \ref{MainThm}:
\begin{equation*} 
\operatorname{F}_{\beta}(t, Z)= 2 \sum_{g\geq0}n_{g,\beta} (-1)^g \sum_{m>0}(e^{ - i\pi m \eps } - e^{i \pi m \eps})^{2g} e^{2\pi i m  v_{\beta} } \sum_{k>0}\frac{1}{k^2} \frac{ t}{1+ (\frac{2 \pi m}{k} t)^2}. 
\end{equation*}
Since the sum over the genus $g$ is finite, this shows that we may also regard the function $\operatorname{F}_{\beta}(t, Z)$ as a well-defined function of $t \in \R_{>0}$ with values in $\C[e^{\pm i \pi \eps}]\pow{e^{2\pi i v_{\beta}}}$. 

Therefore it makes sense to consider the behaviour of $\operatorname{F}_{\beta}(t, Z)$ as $t \to +\infty$, as a function with values in $\C[e^{\pm i \pi \eps}]\pow{e^{2\pi i v_{\beta}}}$. Here we compute the leading order term for this expansion. As we observed in the Introduction, this is somewhat reminiscent of computations of the entries of the ``central connection matrix" for the Dubrovin connection in quantum cohomology \cite{cotti, dubrovin, gamma}.

So rather than expanding the rational function of $t$ as a formal power series around $t = 0$ as in the proof of Theorem \ref{MainThm}, we consider the function of $t$ defined by the series
\begin{equation*}
\sum_{k>0}\frac{1}{k^2} \frac{ t}{1+ (\frac{2 \pi m}{k} t)^2} = \sum_{k>0} \frac{ t}{k^2+  (2 \pi m t)^2}.
\end{equation*} 
Notice the identity
\begin{equation*}
\sum_{k > 0} \frac{1}{k^2 + x^2} = \frac{\pi  x \coth (\pi  x)-1}{2 x^2}.
\end{equation*}
In particular we have
\begin{equation*}
\sum_{k>0}\frac{ t}{k^2+  (2 \pi m t)^2} = \frac{2 \pi ^2 m t \coth \left(2 \pi ^2 m t\right)-1}{8 \pi ^2 m^2 t}
\end{equation*}
Taking the limit for real $t\to +\infty$ shows that we have
\begin{equation*}
\sum_{k>0}\frac{ t}{k^2+  (2 \pi m t)^2} = \frac{1}{4 m} + O(t^{-1}),
\end{equation*}
uniformly in $m > 0$. In particular we find as $t\to +\infty$
\begin{equation*} 
\operatorname{F}_{\beta}(t, Z)= \frac{1}{2} \sum_{g\geq0}n_{g,\beta} (-1)^g \sum_{m>0}\frac{1}{m}(e^{ - i\pi m \eps } - e^{i \pi m \eps})^{2g} e^{2\pi i m  v_{\beta} } + O(t^{-1}).
\end{equation*}
Expanding
\begin{align*}
(e^{ - i\pi m \eps } - e^{i \pi m \eps})^{2g} = \sum^{2g}_{h = 0} \binom{2g}{h} (-1)^h e^{ 2 \pi i(g-h) m \eps}  
\end{align*}
we find 
\begin{align*}
&\sum_{m>0}\frac{1}{m}(e^{ - i\pi m \eps } - e^{i \pi m \eps})^{2g} e^{2\pi i m  v_{\beta} } \\&= \sum^{2g}_{h = 0} \binom{2g}{h} (-1)^h \sum_{m>0}\frac{1}{m}e^{ 2 \pi i m ( (g-h)\eps + v_{\beta})}\\
&= -\sum^{2g}_{h = 0} \binom{2g}{h} (-1)^h \log(1 - e^{ 2 \pi i ( (g-h)\eps + v_{\beta})} ).
\end{align*}
Thus we have shown that there is an expansion as $t \to +\infty$  
\begin{align*}
\exp(\bra [D], \beta \ket\operatorname{F}_{\beta}(t, Z)) &= \prod_{g \geq 0} \prod^{2g}_{h = 0}(1 - e^{ 2 \pi i ( (g-h)\eps + v_{\beta})})^{-\frac{1}{2} (-1)^{g+h} \binom{2g}{h} n_{g,\beta} \bra [D], \beta \ket}\\ &+ O(t^{-1}),
\end{align*}
where the product over $g$ is finite. We observe that the leading order term can be thought of as a formal family of automorphisms of an algebraic torus, of the type studied in \cite{gps}. In other words it seems natural to associate to this leading order term an automorphism of the ring $\C[x_{[D]}, e^{\pm 2\pi i\eps }]\pow{e^{2\pi i v_{\beta}}}$, where $[D]$ varies along divisor classes in $H_4(X, \Z)$, which is $\C\pow{e^{2\pi i v_{\beta}}}$-linear and only acts nontrivially on the generators $x_{[D]}$, with the rule 
\begin{equation*}
x_{[D]} \mapsto x_{[D]} \prod_{g \geq 0} \prod^{2g}_{h = 0}(1 - e^{2\pi i v_{\beta}} e^{ 2 \pi i (g-h)\eps})^{-\frac{1}{2} (-1)^{g+h} \binom{2g}{h} n_{g,\beta} \bra [D], \beta \ket}.
\end{equation*}

\end{document}